\documentclass[12pt]{amsart}
\usepackage{amsmath,amsfonts,amsthm,amscd,amssymb,mathrsfs,amssymb}
\usepackage{stmaryrd}
\usepackage{graphicx}
\newtheorem{theorem}{Theorem}

\newtheorem{definition}[theorem]{Definition}

\newtheorem{corollary}[theorem]{Corollary}

\newtheorem{remark}[theorem]{Remark}

\newcommand{\CP}{\mathbb{CP}}
\newcommand{\CC}{\mathbb{C}}

\newcommand{\RR}{\mathbb{R}}

\newcommand{\z}{\mathbf{z}}
\newcommand{\x}{{\boldsymbol\xi}}

\newcommand{\PP}{\mathbb{P}}
\setlength{\oddsidemargin}{0.25in}
\setlength{\evensidemargin}{0.25in} 
\setlength{\textwidth}{6in}
\setlength{\topmargin}{-0in} 
\setlength{\textheight}{8.5in}
\numberwithin{equation}{section}
\numberwithin{theorem}{section}
\numberwithin{table}{section}
\numberwithin{table}{section}
\begin{document}
\bibliographystyle{amsalpha} 

\title[K\"ahler conformal compactifications]{On K\"ahler conformal compactifications of $U(n)$-invariant ALE spaces}
\author{Michael G. Dabkowski}
\address{Department of Mathematics, University of Michigan, 
MI, 48109}
\email{mgdabkow@umich.edu}
\author{Michael T. Lock}\footnote[1]{Research supported in part by the NSF RTG Grant DMS-1148490.}
\address{Department of Mathematics, The University of Texas at Austin, 
TX, 78712}
\email{mlock@math.utexas.edu}
\begin{abstract}
We prove that a certain class of ALE spaces always has a K\"ahler conformal compactification, and moreover provide explicit formulas for the conformal factor and the K\"ahler potential of said compactification.  We then apply this to give a new and simple construction of the canonical Bochner-K\"ahler metric on certain weighted projective spaces, and also to explicitly construct a family K\"ahler edge-cone metrics on $\CP^2$, with singular set $\CP^1$, having cone angles $2\pi\beta$ for all~$\beta>0$.  We conclude by discussing how these results can be used to obtain certain well-known Einstein metrics.
\end{abstract}
\maketitle
\vspace{-.3in}

\section{Introduction}
This work is motivated by the conformal relationship between two well-known K\"ahler metrics.  The Burns metric on $\CC^2$ blown-up at the origin is conformal in an orientation reversing manner to the Fubini-Study metric on $\CP^2$ minus a point.  In fact, it extends smoothly to the one-point compactification, see \cite{LeBrunnegative}.  While there is a conformal relationship between these metrics, it is important to note that they are 
K\"ahler with respect to different complex structures.  There are a wide variety of K\"ahler metrics on noncompact spaces, and we investigate when relationships such as this can occur in a more general situation.

We consider K\"ahler metrics on noncompact manifolds which asymptotically look like
$\RR^n/\Gamma$, for a finite subgroup $\Gamma\subset {\rm{SO}}(n)$ acting freely on $\RR^n\setminus\{0\}$, with the metric induced from the Euclidean metric.  More precisely, we have the following definition.
\begin{definition}
\label{ALEdef}
{\em 
We say that a complete Riemannian manifold $(X,g)$ is {\em asymptotically locally Euclidean\em} (ALE) of order $\tau$ if there exists a finite subgroup $\Gamma \subset {\rm{SO}}(n)$ which acts freely on $\RR^n\setminus \{0\}$, a compact subset $K\subset X$, and a diffeomorphism
\begin{align*}
\Phi:X\setminus K\rightarrow (\RR^n\setminus B(0,R))/\Gamma,
\end{align*}
satisfying
\begin{align*}
(\Phi_*g)_{ij}&=\delta_{ij}+\mathcal{O}(r^{-\tau})\\
\partial^{|k|}(\Phi_*g)_{ij}&=\mathcal{O}(r^{-\tau-k})
\end{align*}
for any partial derivative of order $k$ as $r\rightarrow \infty$, where $r$ is the distance to a fixed basepoint.  We call $\Gamma$ the group at infinity.\em}
\end{definition}

Since ALE spaces have a group action at infinity, we should not expect them to compactify to a smooth manifold.  Instead, we look for compactifications to orbifolds with isolated singularities modeled on $\RR^n/ \Gamma$ for some $\Gamma$ as above.  We say that $g$ is a  {\em Riemannian orbifold metric with isolated singularities\em} on an $n$-manifold $M$ if it is a smooth Riemannian metric away from finitely many singular points, and at  any singular point the metric is locally the quotient of a smooth $\Gamma$-invariant metric on $B^n$ by the orbifold group $\Gamma$.

We will also be interested in another class of singular metrics which is a generalization of an orbifold metric having a higher dimensional singular set.   Let $M$ be a smooth $n$-manifold with a smoothly embedded $(n-2)$-dimensional submanifold $\Sigma$.   Near any point $p\in\Sigma$ choose coordinates $(y_1,y_2,x_1,\dots,x_{n-2})$ so that $\Sigma$ is given by $y_1=y_2=0$, and then change coordinates to a transversal polar coordinate system by setting $y_1=r\cos\theta$ and $y_2=r\sin\theta$.  We say that $g$ is a {\em Riemannian edge-cone metric\em} on $(M,\Sigma)$ with cone angle $2\pi\beta$ if it is a smooth Riemannian metric on $M\setminus \Sigma$, and around any $p\in \Sigma$ the metric can be expressed as
\begin{align}
g=dr^2+\beta^2r^2(d\theta+u_idx^i)^2+f_{ij}dx^i\otimes dx^j+r^{1+\epsilon}h,
\end{align}
where the $f_{ij}$, which are symmetric in $i$ and $j$, and the $u_i$ are smooth functions on $\Sigma$, and $h$ is a symmetric two-tensor field with infinite conormal regularity along $\Sigma$.  See \cite{JMR, AtiyahLeBrun} for more details.

\begin{remark}
{\em The definition of ALE space extends naturally to include Riemannian orbifold and edge-cone metrics.  We will distinguish by describing the space as nonsingular or by the type of singularity as needed.\em}
\end{remark}

\subsection{K\"ahler conformal compactifications}
\label{results}
Given a K\"ahler ALE space, we want to conformally relate it to 
a compact K\"ahler manifold, orbifold, or edge-cone space.
A {\em{conformal compactification}} of an ALE space $(X,g)$,
is a choice of a conformal factor $u : X \rightarrow \RR^+$ 
such that $u =\mathcal{O}(r^{-2})$ as $r \rightarrow \infty$, and we denote the compactified space by $(\widehat{X},\widehat{g})$, where $\widehat{X}=X\cup \{\infty\}$ is a compact orbifold and $\widehat{g}=u^2g$.
If $(X,g)$ is an
ALE space with group at infinity $\Gamma$, then the conformal compactification naturally reverses orientation and has orbifold group $\Gamma$.  However, if we reverse the orientation of $(\widehat{X}, \widehat{g})$ the orbifold group will be the be the orientation reversed conjugate group
$\overline{\Gamma}$.  By this we mean that the action is conjugate to that of $\Gamma$ by an element of
${\rm{O}}(n)\setminus {\rm{SO}}(n)$.

The focus here will be on {\em ${\rm{U}}(n)$-invariant K\"ahler ALE spaces\em}.  These are K\"ahler manifolds $(X,\mathcal{J},g)$ whose underlying Riemannian manifolds are ALE spaces, and whose metrics satisfy the rotational symmetry condition that they arise from a smooth potential function $\phi(\z)$ on $\CC^n\setminus \{0\}$, where $\z=|z_1|^2+\cdots +|z_n|^2$.  The actual K\"ahler ALE space can then be obtained by taking the appropriate quotient of $\CC^n\setminus\{0\}$ by
$\mathbb{Z}_k$, given by $(z_1,\cdots, z_n)\mapsto(e^{2\pi i/k}z_1,\cdots,e^{2\pi i/k} z_n)$, and attaching a $\CP^{n-1}$ or smooth point at the origin so that the metric is complete.  Note that this has a clear extension to ${\rm{U}}(n)$-invariant K\"ahler ALE spaces with edge-cone singularities along the $\CP^{n-1}$, or orbifold points, at the origin.  The underlying manifold of such ALE spaces obtained by  attaching a $\CP^{n-1}$ at the origin is the total space of $\mathcal{O}_{\PP^{n-1}}(-k)$, the $k^{th}$-power of the the tautological line bundle over $\CP^{n-1}$.

We will show that every ${\rm{U}}(n)$-invariant K\"ahler ALE metric on $\mathcal{O}_{\PP^{n-1}}(-k)$, including those with edge-cone singularities, admits a conformal compactification to a metric that is K\"ahler with respect to a {\em different complex structure}.  
From here on, this will be referred to as a {\em K\"ahler conformal compactification\em}.
This idea of a conformal structure containing two representatives which are K\"ahler with respect to different complex structures was examined in real $4$-dimensions in \cite{Apostolov-Calderbank-Gauduchon}.  In arbitrary real even-dimensions, the cohomogeneity-one situation, which is our focus, has certain structural properties that we are able to exploit in order to obtain interesting results.  We now state our main theorem.

\begin{theorem}
\label{maintheorem}
Let $g$ be a ${\rm{U}}(n)$-invariant K\"ahler ALE metric on $\mathcal{O}_{\PP^{n-1}}(-k)$, which arises from the K\"ahler potential $\phi(\z)$.  Then, there exists a  ${\rm{U}}(n)$-invariant K\"ahler conformal compactification given by the conformal factor 
\begin{align*}
u^2=\Big(\z\frac{\partial \phi}{\partial \z}\Big)^{-2}.
\end{align*}
Moreover, this is the unique such conformal factor up to scale.
\end{theorem}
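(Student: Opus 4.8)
The plan is to reduce the whole problem to the single radial variable $\z$, solve a first--order ODE for the new K\"ahler potential, and then recognize the answer as an honest potential for a new complex structure obtained from $\mathcal{J}$ by an inversion. First I would record the metric attached to $\phi$. Writing $\phi'=\partial\phi/\partial\z$, one computes
\[
g_{j\bar k}=\phi'\,\delta_{jk}+\phi''\,\bar z_j z_k ,
\]
so that, as a Hermitian form, $g$ has the Euler field $E=\sum z_j\partial_{z_j}$ as an eigendirection with eigenvalue $F':=\phi'+\z\phi''=(\z\phi')'$, and eigenvalue $\phi'$ on the $(n-1)$--complex--dimensional horizontal distribution $\{\sum \bar z_j V^j=0\}$ orthogonal to $E$. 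Positivity of $g$ is exactly $\phi'>0$ and $F'>0$, and the ALE normalization forces $\phi'\to 1$ (hence $F:=\z\phi'\sim\z$) as $\z\to\infty$. In this language the proposed factor is $u^2=F^{-2}$.

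Next I would produce the new complex structure. Introduce $w_j=\bar z_j/\z$, an orientation--reversing involution of $\CC^n\setminus\{0\}$ with $\mathbf{w}:=\sum|w_j|^2=1/\z$, which interchanges the two ends ($\z\to\infty \leftrightarrow \mathbf{w}\to 0$, and the zero section $\z\to 0 \leftrightarrow \mathbf{w}\to\infty$). Declaring the $w_j$ holomorphic defines an integrable $\U(n)$--invariant complex structure $\widehat{\mathcal{J}}$; differentiating $w_j$ at a point $(\rho,0,\dots,0)$ shows that $\widehat{\mathcal{J}}$ agrees with $\mathcal{J}$ on the complex line spanned by $E$ and equals $-\mathcal{J}$ on the horizontal distribution. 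Thus $\widehat{\mathcal{J}}$ is genuinely different from $\pm\mathcal{J}$ and reverses orientation, consistent with the orientation conventions described before the theorem, and the point added at $\z=\infty$ is $w=0$, with orbifold group the conjugate $\overline{\Gamma}$.

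For existence, I would transport the eigenbasis through $dw_j$: in the $w$--coordinates the horizontal eigenvalue of $\widehat g:=u^2g$ becomes $u^2\phi'\,\z^2=1/\phi'$ and its $E$--eigenvalue becomes $u^2F'\,\z^2=F'/(\phi')^2$. I therefore \emph{define} $\psi(\mathbf{w})$ by $\psi'(\mathbf{w})=1/\phi'(1/\mathbf{w})$, integrate to obtain it explicitly, and check that (using $d\z/d\mathbf{w}=-\z^2$)
\[
\psi'(\mathbf{w})=\frac{1}{\phi'(1/\mathbf{w})},\qquad \psi'+\mathbf{w}\,\psi''=\frac{F'}{(\phi')^2}.
\]
The horizontal and $E$--eigenvalues of $u^2g$ are then exactly those of the $\U(n)$--invariant K\"ahler metric $i\,\partial_w\bar\partial_w\psi$; since $\psi'=1/\phi'>0$ and $\psi'+\mathbf{w}\psi''=F'/(\phi')^2>0$, it is positive, and being of the form $i\,\partial_w\bar\partial_w\psi$ it is automatically closed, i.e.\ K\"ahler for $\widehat{\mathcal{J}}$. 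The pleasant point is that the radial equation holds \emph{automatically} once the horizontal one is imposed, which is the computational heart of the argument.

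It then remains to see that this is a genuine compactification and to prove uniqueness. Near $w=0$ the normalization $\phi'\to 1$ gives $\psi'\to 1$, so $\widehat g$ is asymptotic to the flat $w$--metric and extends across the added point (smoothly, or as the orbifold/edge--cone model dictated by $\overline{\Gamma}$), while near $\mathbf{w}=\infty$ the regularity of $\widehat g$ over the $\CP^{n-1}$ is inherited from the given behavior of $g$ at the zero section, so the edge--cone case passes through verbatim at the level of $\psi$; the decay $u=1/(\z\phi')=\mathcal{O}(r^{-2})$ is immediate. For uniqueness, $\U(n)$--invariance forces $u=u(\z)$; if $u^2g$ were K\"ahler for $\pm\mathcal{J}$ then $d(u^2)\wedge\w=0$, and Lefschetz injectivity in real dimension $\ge 4$ would force $u$ constant, which is not a compactification, whereas for the adapted structure $\widehat{\mathcal{J}}$ the closedness relation reads $u^2F'\z^2=(\mathbf{w}\,u^2\phi'\z^2)'$, a linear homogeneous first--order ODE in $u^2$ whose solution space is one--dimensional. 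The main obstacle I anticipate is exactly this global bookkeeping: verifying that $\widehat{\mathcal{J}}$ extends as an integrable structure over both the added point and the $\CP^{n-1}$ so that $\widehat g$ is a bona fide K\"ahler orbifold (resp.\ edge--cone) metric there, and, for uniqueness, confirming that $\widehat{\mathcal{J}}$ together with $\pm\mathcal{J}$ exhaust the admissible invariant complex structures.
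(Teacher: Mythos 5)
Your proposal is correct and follows essentially the same route as the paper: a conjugate inversion ($w=\bar z/\z$ for you, $(\bar z_1/\z,z_2/\z,\dots,z_n/\z)$ in the paper) produces the second complex structure, and ${\rm U}(n)$-invariance reduces the K\"ahler condition for $u^2g$ to the same first-order ODE $\partial_{\z}\log u=-\partial_{\z}\log\big(\z\,\partial\phi/\partial\z\big)$, with the decay $u=\mathcal{O}(r^{-2})$ and the extension over the zero section checked afterward. The differences are presentational — your eigenvalue bookkeeping on the Euler/horizontal splitting replaces the paper's restriction of the pulled-back K\"ahler form to the $z_1$-axis, and your Lefschetz argument excluding factors K\"ahler for $\pm\mathcal{J}$ makes the uniqueness claim somewhat more explicit than the paper's ODE derivation alone.
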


Topologically, the conformal compactification of $\mathcal{O}_{\PP^{n-1}}(-k)$ is the weighted projective space $\CP^n_{(1,\cdots,1,k)}$, see Section \ref{weighted}.  The proof of Theorem \ref{maintheorem} is given in Section~\ref{main}.  In addition to finding the conformal factor, which gives a K\"ahler conformal compactification of any ${\rm{U}}(n)$-invariant K\"ahler ALE space, we are actually able to modify the proof to be able to find the K\"ahler potential of the compactification.  We state this result as the following corollary, which is also proved in Section \ref{main}.

\begin{corollary}
\label{potentialcorollary}
Let $g$ be a ${\rm{U}}(n)$-invariant K\"ahler ALE metric on $\mathcal{O}_{\PP^{n-1}}(-k)$, which arises from the K\"ahler potential $\phi(\z)$.   Then the corresponding K\"ahler
conformal compactification, of Theorem \ref{maintheorem}, has K\"ahler potential 
\begin{align*}
\widehat{\phi}(\z)=-\int \Big(\z^2\frac{\partial \phi}{\partial\z}\Big)^{-1}d\z.
\end{align*}
\end{corollary}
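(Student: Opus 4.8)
The plan is to push the computation behind Theorem~\ref{maintheorem} one step further, extracting not just the conformal factor but the full radial profile of the rescaled metric. First I would record the standard reduction of a $\U(n)$-invariant K\"ahler metric to a single profile function. Writing $\eta(\z)=\z\,\partial\phi/\partial\z$, one has $\w=i\partial\bar\partial\phi=\phi''\,\beta+\phi'\,\alpha$, where $\alpha=i\sum_j dz_j\wedge d\bar z_j$ is the flat form and $\beta=i\,\partial\z\wedge\bar\partial\z$. Splitting off the radial--Reeb plane $V$ (spanned by the radial and Hopf directions) from the horizontal $\CP^{n-1}$-directions $H$, and using $\alpha|_V=\beta/\z$, this gives $\w|_V=(\eta'/\z)\,\beta$ and $\w|_H=\phi'\,\alpha_H$; equivalently the Riemannian metric is $g=\eta'\,dr^2+\eta'\,r^2\gamma^2+\eta\,g_{FS}$ with $r^2=\z$, $\gamma$ the Hopf form, and $g_{FS}$ the Fubini--Study metric on the base.

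Next I would feed in the conformal factor $u^2=\eta^{-2}$ from Theorem~\ref{maintheorem} and use the explicit complex structure $\widehat{\mathcal J}$ produced there. The key structural input, established in the proof of Theorem~\ref{maintheorem}, is that $\widehat{\mathcal J}$ agrees with $\mathcal J$ on the horizontal directions $H$ and reverses it on the radial--Reeb plane $V$; consequently $\widehat\gamma^2=\gamma^2$ and the base $g_{FS}$ is unchanged, so $\widehat g=\eta^{-2}g$ again has profile form $\widehat g=\widehat\eta'\,d\widehat r^2+\widehat\eta'\,\widehat r^2\gamma^2+\widehat\eta\,g_{FS}$ with respect to a new radial invariant $\widehat{\z}=\widehat r^{\,2}$ and profile $\widehat\eta=\widehat{\z}\,\partial\widehat\phi/\partial\widehat{\z}$. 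Matching the three coefficient functions of $\eta^{-2}g$ against this form is then a short calculation: the horizontal coefficients give $\widehat\eta=1/\eta$, while comparing the radial and Reeb coefficients forces $d\widehat{\z}/d\z=-\widehat{\z}/\z$, hence $\widehat{\z}=1/\z$ up to the scale reflecting the freedom in Theorem~\ref{maintheorem}.

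Finally I would convert these two identities into the stated potential. From $\widehat\eta=\widehat{\z}\,\partial\widehat\phi/\partial\widehat{\z}=1/\eta=(\z\,\partial\phi/\partial\z)^{-1}$ together with $\widehat{\z}=1/\z$ one obtains the duality $\partial\widehat\phi/\partial\widehat{\z}=(\partial\phi/\partial\z)^{-1}$, and the chain rule gives $\partial\widehat\phi/\partial\z=(\partial\widehat\phi/\partial\widehat{\z})(d\widehat{\z}/d\z)=-(\z^2\,\partial\phi/\partial\z)^{-1}$. Integrating in $\z$ yields precisely $\widehat\phi(\z)=-\int(\z^2\,\partial\phi/\partial\z)^{-1}\,d\z$, the integration constant being the harmless additive ambiguity in a K\"ahler potential. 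I expect the main obstacle to be the bookkeeping around $\widehat{\mathcal J}$: one must correctly track that the orientation reversal on $V$ flips the sign in the Reeb comparison, and then check that the radial and Reeb matchings---which over-determine the pair $(\widehat{\z},\widehat\eta)$---are in fact consistent. This consistency is exactly what both pins down $\widehat{\z}=1/\z$ and certifies that the resulting $\widehat\phi$ is a genuine K\"ahler potential for $\widehat g$ rather than merely a formal antiderivative.
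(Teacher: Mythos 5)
Your proposal is correct, and it reaches the paper's key intermediate identity $\partial\widehat{\phi}/\partial\z=-\big(\z^2\,\partial\phi/\partial\z\big)^{-1}$ (equation \eqref{exponentialpolar}) by a genuinely different route. The paper treats the corollary as a one-line afterthought: in proving Theorem \ref{maintheorem} it has already derived, from the pullback identity $\varphi^*\widehat{\omega}=u^2\omega$ on the $z_1$-axis, the two-equation system \eqref{equations}, and the corollary follows by substituting $u^2=(\z\,\partial\phi/\partial\z)^{-2}$ into the second of those equations and integrating. You instead rebuild the comparison in Riemannian terms: you put both metrics in cohomogeneity-one profile form with momentum profile $\eta=\z\,\partial\phi/\partial\z$, read off $\widehat{\eta}=1/\eta$ from the horizontal block, and derive the coordinate relation $\widehat{\z}=1/\z$ from consistency of the radial and Reeb blocks, whereas the paper gets that relation for free from its explicit inversion map $\varphi(z)=(\bar z_1/\z,z_2/\z,\dots,z_n/\z)$ (under which $\x=1/\z$). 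Your version buys a cleaner conceptual statement --- the Legendre-type duality $\widehat{\eta}=1/\eta$, $\widehat{\z}=1/\z$ --- and an internal consistency check; the paper's buys brevity, since the ODE system is already in hand. One factual quibble: with the paper's choice of $\varphi$, which conjugates the first coordinate, the map is holomorphic along the $z_1$-axis, so the pulled-back complex structure \emph{agrees} with $\mathcal{J}$ on the radial--Reeb plane there; the minus sign in the paper's second equation comes from the reparametrization $\x=1/\z$, not from a reversal of $\mathcal{J}$ on $V$. Your description matches the unconjugated inversion $z\mapsto z/\z$ instead. This does not damage your argument, because only $d\widehat r^{\,2}$, $\gamma^2$ and $g_{FS}$ enter the coefficient matching and these are insensitive to the sign, but the attribution of the sign should be corrected if you keep the $\widehat{\mathcal{J}}$ discussion.
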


\begin{remark}
{\em
Both Theorem \ref{maintheorem} and Corollary \ref{potentialcorollary} extend to the situation where the metric has an edge-cone singularity.
}
\end{remark}

\begin{remark}
{\em
Consider, instead, a ${\rm{U}}(n)$-invariant K\"ahler ALE space arising from the K\"ahler potential $\phi(\z)$ and obtained by attaching a smooth point or orbifold point at the origin.  Applying the conformal factor $u^2=\big(\z\frac{\partial\phi}{\partial\z}\big)^{-2}$ to this space, yields a metric that is K\"ahler with respect to a different complex structure as well.  However, the space upon which this new metric lives would be the original space ``flipped inside out'' as this conformal factor compactifies at infinity, but blows up at the origin.
}
\end{remark}

When dealing with compactifications in general, a delicate issue arises with the regularity of the metric 
$\widehat{g}$ because a priori it is only a $C^{1,\alpha}$-orbifold metric.  However, with certain geometric conditions, one does obtain a $C^{\infty}$-orbifold metric.
For example, in dimension four, if the ALE space is anti-self-dual then the conformal compactification is a $C^{\infty}$-orbifold metric \cite{TV2, CLW}.  This was subsequently generalized to Bach-flat metrics in
\cite{Streets}, and to obstruction-flat metrics in \cite{AV12}.  

In our case of ${\rm{U}}(n)$-invariant ALE spaces, if the conformal metric $\widehat{g}=u^2g$ admits a Taylor expansion around the orbifold point, then it will be a $C^{\infty}$-orbifold metric because it would extend smoothly, in the orbifold sense, over the singular point since the expansion would be in terms of the variable $\x=|\xi_1|^2+\cdots+|\xi_n|^2$.  This amounts to checking that $\big(\frac{\partial \phi}{\partial \z}\big)^{-1}$ has a Taylor expansion at ALE infinity.
It is also likely that a ${\rm{U}}(n)$-invariant ALE space being K\"ahler scalar-flat is enough to ensure a $C^{\infty}$-orbifold metric in the compactification.  This would be interesting to investigate in light of possible applications of Theorem~\ref{maintheorem} to a family of K\"ahler scalar-flat ALE metrics on complex line bundles over $\CP^n$ which are higher dimensional generalizations of the metrics that we introduce in Section \ref{LeBrunmetrics}, see \cite{Simanca,SSChen}.  However, all of our applications of Theorem \ref{maintheorem} are to anti-self-dual metrics, so the compactifications here are already guaranteed to be $C^{\infty}$-orbifold metrics.

\subsection{Outline of paper}
K\"ahler ALE metrics play an important role in modern geometry, and it is an interesting question to understand when they admit K\"ahler conformal compactifications.  Theorem \ref{maintheorem} answers this question in the ${\rm{U}}(n)$-invariant case and Corollary \ref{potentialcorollary} actually provides an explicit formula for the K\"ahler potential of the compact orbifold metric in terms of the K\"ahler potential of the ALE space.  We prove these results in Section \ref{main}.  Next, in  Section \ref{LeBrunmetrics}, we introduce a family of K\"ahler scalar-flat ALE spaces, due to LeBrun, towards which the remainder of this work is focused.  Finally, in Section~\ref{compactifications} we provide a new and simple construction of the canonical Bochner-K\"ahler metric on a subclass of weighted projective spaces, as well as explicitly construct a family of K\"ahler edge-cone metrics on $(\CP^2,\CP^1)$.

\subsection{Acknowledgements}
The authors would like to thank Jeff A. Viaclovsky for suggesting this problem as well as for his advice and many helpful suggestions.

\section{Proofs of Theorem \ref{maintheorem} and Corollary \ref{potentialcorollary}}
\label{main}

\subsection{Proof of Theorem \ref{maintheorem}}
Let $g$ be a ${\rm{U}}(n)$-invariant K\"ahler ALE metric on $\mathcal{O}_{\PP^{n-1}}(-k)$ that arises from a K\"ahler potential $\phi(\z)$ on $\CC^n\setminus\{0\}$ with the standard complex structure $\mathcal{J}_{\mathbf{z}}$, where $\z=|z_1|^2+\cdots+|z_n|^2$.
On a spherical shell centered at the origin, where $\partial_{\z}$ and  $\bar{\partial}_{\z}$ are with respect to $\mathcal{J}_{\mathbf{z}}$, consider the K\"ahler form
\begin{align}
\omega=\sqrt{-1}\partial_{\z} \bar{\partial}_{\z}\phi(\z).
\end{align}
Because of the ${\rm{U}}(n)$-invariance, we can restrict our work to the $z_1$-axis where
\begin{align}
\label{omega_g}
\omega=\sqrt{-1}\Big[\Big(\frac{\partial \phi}{\partial \z}+\z\frac{\partial^2 \phi}{\partial \z^2}\Big)dz_1\wedge d\bar{z}_1+\frac{\partial \phi}{\partial \z}\sum_{i=2}^ndz_i\wedge d\bar{z}_i\Big].
\end{align}

We search for a conformal compactification factor $u(\z)$, depending only on the the radial variable $\z$, so that the conformal compactification $(\widehat{X},\widehat{g})$, where $\widehat{g}=u^2g$, is K\"ahler.  Since $u$ is rotationally symmetric, the metric $\widehat{g}$ would exhibit this symmetry as well around the orbifold point.  Therefore, in some coordinate system $(\xi_1,\cdots,\xi_2)$ on $\CC^n$, with the standard complex structure $\mathcal{J}_{\x}$ and where $\{0\}$ corresponds to the orbifold point, we seek to find a K\"ahler potential $\widehat{\phi}(\x)$, a function of the radial variable $\x~=~|\xi_1|^2+\cdots+|\xi_n|^2$, of some unknown conformal metric $\widehat{g}=u^2g$.  If such coordinates and potential function exist, then letting $\partial_{\x}$ and  $\bar{\partial}_{\x}$ be with respect to $\mathcal{J}_{\x}$, we could similarly examine the restriction of the K\"ahler form
\begin{align}
\widehat{\omega}=\sqrt{-1}\partial_{\x} \bar{\partial}_{\x}\widehat{\phi}(\x)
\end{align}
to the the $\xi_1$-axis, which would be
\begin{align}
\label{omega_hatg}
\widehat{\omega}=\sqrt{-1}\Big[\Big(\frac{\partial \widehat{\phi}}{\partial \x}+\x\frac{\partial^2 \widehat{\phi}}{\partial \x^2}\Big)d\xi_1\wedge d\bar{\xi}_1+\frac{\partial \widehat{\phi}}{\partial \x}\sum_{i=2}^nd\xi_i\wedge d\bar{\xi}_i\Big].
\end{align}

An inversion map would be necessary to relate the respective coordinate systems.  One initially thinks to search for a map that is holomorphic on $\CC^n\setminus \{0\}$, in which case the pullback of $\widehat{\omega}$ would be the K\"ahler form in $(z_1,\cdots,z_2)$-coordinates, and then hope that this relationship could be be used to solve for $u(\z)$.  However, there does not exist an inversion map that is holomorphic on all of $\CC^n\setminus \{0\}$ when $n>1$.  
Instead, to relate the two coordinate systems, let us choose the map $\varphi:\CC^n\setminus \{0\}\rightarrow\CC^n\setminus \{0\}$ defined by
\begin{align}
\varphi(z_1,z_2,\cdots z_n)=\Big(\frac{\bar{z}_1}{\z},\frac{z_2}{\z},\cdots,\frac{z_n}{\z} \Big)=(\xi_1,\xi_2,\cdots,\xi_n).
\end{align}
There is actually some flexibility in the choice of the inversion map, however we will see that this choice in particular will greatly simplify our work. 

It is easy to check that $\varphi^2=Id$ and that $\varphi$
is holomorphic on the $z_1/\xi_1$-axes away from the origin.  Therefore, on these axes
\begin{align}
\label{Jcommute}
\mathcal{J}_{\mathbf{\xi}}\circ \varphi_*(x)=\varphi_*\circ\mathcal{J}_{\mathbf{\z}}(x),
\end{align}
where $x\in T\RR^{2n}$ and $\CC^n$ is identified with $\RR^{2n}$ as usual.  Also, the pullback operator satisfies the usual commutativity properties with the $\partial$ and $\bar{\partial}$ operators on the $z_1/\xi_1$-axes.  However, this is not true away from these axes and even more importantly, since $\varphi$ is not holomorphic on all of $\CC^n\setminus \{0\}$, the pullback operator does not commute with the $\partial\bar{\partial}$ operator anywhere, even on the $z_1/\xi_1$-axes, because there are nonvanishing terms when one takes two derivatives.
Therefore we cannot globally understand $\widehat{\omega}$ in $(z_1,\cdots, z_n)$-coordinates directly in terms of the pullback, which is reasonable because our choice of the inversion map was somewhat arbitrary in that we do not really consider what the complex structure $\mathcal{J}_{\mathbf{\x}}$ is in $(z_1,\cdots, z_n)$-coordinates.  
Although this seems like a problem, we will now see that our choice of $\varphi$, because it is holomorphic just on the $z_1/\xi_1$-axes, will be enough for us to find $u$ and $\widehat{\phi}$ in terms of the known $\phi$.

In the respective coordinate systems, write the K\"ahler forms in terms of the metrics and complex structures as
\begin{align}
\begin{split}
\omega(x,y)=g(\mathcal{J}_{z}x, y)\phantom{==}\text{and}\phantom{==}\widehat{\omega}(v,w)={\widehat{g}}(\mathcal{J}_{\x}v, w),
\end{split}
\end{align}
where $x,y\in T\RR^{2n}$ and $v,w\in T\RR^{2n}$, and $\CC^n$ is identified with $\RR^{2n}$ as usual in the respective coordinates systems.
Even though the pullback of $\widehat{\omega}$ will not globally be the K\"ahler form, we will be able to see a useful relationship between $\varphi^*\widehat{\omega}$ and $\omega$.  Since $\widehat{g}=u^2g$, and $\varphi^*$ commutes with the complex structure on the $z_1/\xi_1$-axes as in \eqref{Jcommute}, we find the following relationship on the $z_1$-axis:
\begin{align}
\begin{split}
\label{pullback}
\varphi^*\widehat{\omega}(x,y)&=\varphi^*\widehat{g}\big(\mathcal{J}_{\x}\cdot, \cdot\big)(x,y)
=\widehat{g}\big(\mathcal{J}_{\x}\circ\varphi_*(x), \varphi_*(y)\big)\\
&=\widehat{g}\big(\varphi_*\circ \mathcal{J}_{\z}(x), \varphi_*(y)\big)
=\varphi^*\widehat{g}\big(\cdot,\cdot\big)\big(\mathcal{J}_{\z}(x), y\big)\\
&=u^2g\big(\mathcal{J}_{\z}(x), y\big)=u^2\omega(x,y).
\end{split}
\end{align}

The restriction to the $z_1$-axis has greatly simplified our work, and we
use this relationship here to set up a system of ODEs, which we then solve to find the conformal factor in terms of $\phi$ which is known.  The pullback of $\widehat{\omega}$ on the $z_1$-axis is
\begin{align}
\label{pullback_omega}
\varphi^*\widehat{\omega}=\sqrt{-1}\Big[\Big(\frac{\partial\widehat{\phi}}{\partial \z}+\z\frac{\partial^2 \widehat{\phi}}{\partial \z^2}\Big)dz_1\wedge d\bar{z}_1-\frac{\partial \widehat{\phi}}{\partial \z}\sum_{i=2}^ndz_i\wedge d\bar{z}_i\Big].
\end{align}
It is essential to notice the minus sign appearing before the second term.  
Examining the equation $\varphi^*\widehat{\omega}=u^2\omega$ on the $z_1$-axis, from \eqref{pullback}, using the formulas \eqref{pullback_omega} and \eqref{omega_g} for $\varphi^*\widehat{\omega}$ and $\omega$ on this axis respectively, we arrive at the following system of equations
\begin{align}
\begin{cases}
\label{equations}
\displaystyle u^2 \Big(\frac{\partial \phi}{\partial \z}+\z\frac{\partial ^2\phi}{\partial \z^2}\Big)= \Big(\frac{\partial \widehat{\phi}}{\partial \z}+\z\frac{\partial ^2\widehat{\phi}}{\partial \z^2}\Big)
\vspace{.2in}\\
\displaystyle u^2\Big(\frac{\partial \phi}{\partial \z}\Big)=-\frac{\partial \widehat{\phi}}{\partial \z}.
\end{cases}
\end{align}
By taking a derivative of the second equation in \eqref{equations} we see that
\begin{align}
\frac{\partial ^2\widehat{\phi}}{\partial \z^2}=-2u\frac{\partial u}{\partial \z}\frac{\partial \phi}{\partial \z}-u^2\frac{\partial ^2\phi}{\partial \z^2}.
\end{align}
Substituting this along with the second equation in \eqref{equations} into the right hand side of the first equation from \eqref{equations}, we find that 
\begin{align}
u^2 \Big(\frac{\partial \phi}{\partial \z}+\z \frac{\partial ^2\phi}{\partial \z^2}\Big )=-u^2\frac{\partial \phi}{\partial \z}-2u\z\frac{\partial u}{\partial \z}\frac{\partial \phi}{\partial \z}- u^2\z\frac{\partial ^2\phi}{\partial \z^2},
\end{align}
which simplifies to
\begin{align}
\frac{\partial}{\partial\z}\log(u)=-\frac{\partial}{\partial\z}\log\Big(\z\frac{\partial\phi}{\partial\z}\Big).
\end{align}
Hence, the conformal factor is
\begin{align}
\label{conformalfactor}
u^2=\Big(\z\frac{\partial \phi}{\partial\z}\Big)^{-2}.
\end{align}

Now, we examine the behavior of the conformal factor along the $\CP^{n-1}$ at the origin.  Since the spherical metric on $S^{2n-1}$ decomposes as $g_{S^{2n-1}}=g_{\CP^{n-1}}+h$, where $g_{\CP^{n-1}}$ is the Fubini-Study metric on $\CP^{n-1}$ and $h$ is the metric along the Hopf fiber,  the ${\rm{U}}(n)$-invariant K\"ahler ALE metric on $\mathcal{O}_{\PP^{n-1}}(-k)$ descends via the $\mathbb{Z}_k$ quotient from
\begin{align}
g=\Big[\frac{\partial}{\partial\z}\Big(\z\frac{\partial\phi}{\partial\z}\Big)\Big]\Big(d\sqrt{\z}+\z h\Big)+\Big(\z\frac{\partial\phi}{\partial\z}\Big)g_{\CP^{n-1}}.
\end{align}
Since this metric is defined along the $\CP^{n-1}$ at the origin, in other words when $\z=0$, we see that the conformal factor \eqref{conformalfactor} extends smoothly over this exceptional orbit.

Finally, we will show that $u=\mathcal{O}(r^{-2})$ as $r\rightarrow\infty$ to ensure that it is in fact a K\"ahler conformal compactification.  First, on the $z_1-axis$ change coordinates to real polar coordinates by setting $z_1=(\sqrt{\z_1},\theta_1)$ where $\z_1=|z_1|^2=\mathcal{O}(\z)$.
Then, rewrite \eqref{omega_g}, the restriction of $\omega$ to the $z_1$-axis, in these coordinates
\begin{align}
\begin{split}
\omega&=2\Big(\frac{\partial \phi}{\partial \z}+\z\frac{\partial^2 \phi}{\partial \z^2}\Big)\sqrt{\z_1}(d\sqrt{\z_1}\wedge d\theta_1)+\sqrt{-1}\frac{\partial \phi}{\partial \z}\sum_{i=2}^ndz_i\wedge d\bar{z}_i\\
&=\frac{\partial}{\partial \sqrt{\z_1}}\Big(\z\frac{\partial \phi}{\partial \z}\Big)(d\sqrt{\z_1}\wedge d\theta_1)+\sqrt{-1}\frac{\partial \phi}{\partial \z}\sum_{i=2}^ndz_i\wedge d\bar{z}_i.
\end{split}
\end{align}
Since $\omega=g(\mathcal{J}_z\cdot,\cdot)$ and is ALE of order $\tau$, it must be asymptotic to the K\"ahler form of the standard Hermitian metric on $\CC^n$.  Examining the first term of $\omega$ restricted to the $z_1$-axis we see that as $r\rightarrow\infty$
\begin{align}
d\Big(\z\frac{\partial \phi}{\partial \z}d\theta_1\Big)=\frac{1}{2}d(r_1^2d\theta_1)+\mathcal{O}(r^{-\tau}),
\end{align}
where $r_1$ and $r$ denote the radial distances in the standard Hermitian metric along the $z_1$-axis and on all of $\CC^n$ respectively.
Therefore
\begin{align}
u=\Big(\z\frac{\partial \phi}{\partial\z}\Big)^{-1}=\mathcal{O}(r^{-2})
\end{align}
as $r\rightarrow\infty$, which completes the proof.

\subsection{Proof of Corollary \ref{potentialcorollary}}
By substituting formula \eqref{conformalfactor} for the conformal factor into the second equation of \eqref{equations}, we see that
\begin{align}
\label{exponentialpolar}
\frac{\partial \widehat{\phi}}{\partial \z}=-\Big(\z^2\frac{\partial \phi}{\partial\z}\Big)^{-1},
\end{align}
so therefore
\begin{align}
\widehat{\phi}(\z)=\int\frac{\partial \widehat{\phi}}{\partial \z}d\z=-\int\Big(\z^2\frac{\partial \phi}{\partial\z}\Big)^{-1}d\z.
\end{align}

\section{LeBrun ${\rm{U}}(2)$-invariant K\"ahler ALE metrics}
\label{LeBrunmetrics}
The remainder of this paper is dedicated to applications of Theorem~\ref{maintheorem}.  We will focus on a family of K\"ahler scalar-flat ALE spaces, due to LeBrun.  These arise in two seemingly distinct ways.  One way is by developing and solving a nonlinear ODE for a potential function, and the other is by using a hyperbolic analogue of the Gibbons-Hawking ansatz.  For our purposes, it will be simpler to focus on the former of the two constructions of which we give a brief description in Section \ref{negativemass}.

These spaces are real four-dimensional, and there are certain geometric properties, unique to this dimension, which will come into play.  Over an oriented Riemannian four-manifold, the Hodge star operator restricted to two-forms $*:\Lambda^2\rightarrow \Lambda^2$ satisfies $*^2=Id$.  This induces the decomposition of two-forms into the $\pm 1$ eigenspaces of $*|_{\Lambda^2}$.  The Weyl tensor has a corresponding decomposition $W=W^+\oplus W^-$, into the self-dual and anti-self-dual parts of the Weyl tensor respectively.  The metric is called anti-self-dual if $W^+=0$ and self-dual if $W^-=0$.  Note that by reversing orientation a self-dual manifold is converted into an anti-self-dual manifold and vice versa.  For a K\"ahler metric in real four-dimensions, $W^+$ is determined by the scalar curvature, so a K\"ahler scalar-flat metric is necessarily anti-self-dual.

\subsection{LeBrun metrics}
\label{negativemass}
In $1988$ LeBrun explicitly constructed a nonsingular anti-self-dual K\"ahler scalar-flat  ALE metric on the total space of the bundle $\mathcal{O}_{\PP^1}(-k)$ for all integers $k\geq 1$, see \cite{LeBrunnegative}.  For $k=1$ and $k=2$, these are the well-known Burns and Eguchi-Hanson metrics respectively \cite{Burns, EguchiHanson}.  When $k>2$, LeBrun showed that these metrics have negative mass thereby providing an infinite family of counter examples to the generalized positive action conjecture \cite{HawkingPope}, hence they are known as the {\em LeBrun negative mass metrics\em}.  These metrics can also be reworked in a way as to give a $1$-parameter family of anti-self-dual K\"ahler scalar-flat  ALE edge-cone metrics on $\mathcal{O}_{\PP^1}(-1)$, where the singular set is the $\CP^1$ at the origin, with cone angles $2\pi\beta$ for all $\beta>0$.  These will be referred to as the {\em LeBrun edge-cone metrics\em}.
We briefly describe LeBrun's method of construction to make it clear that Theorem \ref{maintheorem} is applicable.
For a more thorough description see \cite{LeBrunnegative, LeBrunJDG, LeBrunNN, ViaclovskyFourier, AtiyahLeBrun}.

Consider a K\"ahler potential $\phi(\z)$ on $\CC^2\setminus\{0\}$,
where $\z=|z_1|^2+|z_2|^2$.  The $(1,1)$-form
\begin{align}
\omega=\sqrt{-1}\partial\bar{\partial}\phi,
\end{align}
is the K\"ahler form of a metric on a spherical shell centered at the origin.  Recall that any 
K\"ahler metric satisfies
\begin{align}
\frac{R}{4}\omega\wedge\omega=\rho\wedge\omega,
\end{align}
where $R$ is the scalar curvature and $\rho$ the Ricci form.  Since LeBrun searched for K\"ahler scalar-flat metrics, he used the rotational symmetry of the situation to solve
\begin{align}
0=\rho\wedge\omega,
\end{align}
and thereby obtained a family of potential functions $\{\phi_{\beta}(\z)\}_{\beta\in\RR^+}$.  In fact, he obtained a wider family of potential functions, but for our purposes and in the interest clarity we restrict our attention to these.
For each $\phi_{\beta}(\z)$,  LeBrun defined a new radial coordinate 
 \begin{align}
 \label{r-coordinate}
 r=\sqrt{\z\frac{\partial\phi_{\beta}}{\partial \z}},
 \end{align}
and showed that the corresponding metric is
\begin{align}
\label{negativemassmetric}
g_{LB(\beta)}=\frac{dr^2}{1+\frac{\beta-2}{r^2}+\frac{1-\beta}{r^4}}+r^2\Big[\sigma_1^2+\sigma_2^2+\Big(1+\frac{\beta-2}{r^2}+\frac{1-\beta}{r^4}\Big)\sigma_3^2\Big],
\end{align}
where $r$ is the radial distance from the origin and $\sigma_1,\sigma_2, \sigma_3$ are the usual left-invariant coframe on ${\rm{SU}}(2)=S^3$.  Since for a K\"ahler metric in four-dimensions the self-dual part of the Weyl curvature tensor is determined by the scalar curvature, and these metrics are K\"ahler scalar-flat, they are anti-self-dual.

This metric is clearly singular at the origin.  However, by redefining the radial coordinate as $\tilde{r}^2~=~\beta^{-1}(r^2-1)$ and attaching a $\CP^1$ at $\tilde{r}=0$, one sees that
\begin{align}
\begin{split}
g_{LB(\beta)}&=d\tilde{r}^2+(\sigma_1^2+\sigma_2^2)+\beta^2\tilde{r}^2\sigma_3^2\\
&\phantom{=====}+\tilde{r}^2\Big[\frac{(\beta-1)}{\tilde{r}^2+1}d\tilde{r}^2+\beta(\sigma_1^2+\sigma_2^2)+\Big(\frac{1-\beta}{\beta\tilde{r}^2+1}\Big)\beta^2\tilde{r}^2\sigma_3^2\Big],
\end{split}
\end{align}
is in fact a K\"ahler scalar-flat ALE edge-cone metric on $(\mathcal{O}_{\PP^1}(-1),\CP^1)$ with cone angle $2\pi\beta$, where the singular set  is the $\CP^1$ at the origin.  Moreover, due to the construction, it is clearly ${\rm{U}}(2)$-invariant.  These are the {\em LeBrun egde-cone metrics\em}.  Note that when $\beta=1$ this is the Burns metric.

When $\beta=k$ is a positive integer, LeBrun obtained a nonsingular ALE metric.  
Consider the metric $g_{LB(k)}$ for
any positive integer $k$.  By taking the $\mathbb{Z}_k$ quotient of $\CC^2\setminus \{0\}$ generated by
\begin{align}
\label{Znaction}
(z_0,z_1)\mapsto(e^{2\pi i/k}z_0,e^{2\pi i/k}z_1),
\end{align}
which is rotation in the fiber,
it is clear that this metric extends smoothly over the $\CP^1$ at $\tilde{r}=0$.  Therefore $g_{LB(k)}$ defines a nonsingular
${\rm{U}}(2)$-invariant K\"ahler scalar-flat ALE  metric on the total space of $\mathcal{O}_{\PP^1}(-k)$.  These are the {\em LeBrun negative mass metrics\em}. It is important to distinguish that only when $\beta$ is a positive integer can we obtain a nonsingular metric.

\section{Explicit K\"ahler conformal compactifications}
\label{compactifications}

Here we use Theorem \ref{maintheorem} to examine K\"ahler conformal compactifications of the LeBrun ${\rm{U}}(2)$-invariant K\"ahler ALE spaces introduced in Section \ref{LeBrunmetrics}.  Since these spaces are K\"ahler scalar-flat they are necessarily anti-self-dual and  will therefore compactify to $C^{\infty}$-orbifold metrics, recall  \cite{TV2, CLW}.  In Section \ref{weighted} we provide a new and simple construction of the canonical Bochner-K\"ahler metric on $\CP^2_{(1,1,k)}$ for any positive integer $n$.  Then, in Section \ref{edgecone}, we explicitly construct a family of extremal K\"ahler edge-cone metrics on $(\CP^2,\CP^1)$ having cone angles $2\pi\beta$ for all $\beta>0$.  Also, in Remark \ref{Green's}, we note a interesting relationship between the conformal compactification factor and an explicit formulation of the Green's function for the conformal Laplacian on an orbifold.

\subsection{Bochner-K\"ahler metrics on weighted projective space}
\label{weighted}
Given relatively prime integer weights $1\leq p_0\leq p_1\leq\cdots\leq p_n$, the complex $n$-dimensional weighted projective space $\CP^n_{(p_0,p_1,\cdots,p_n)}$ is the quotient $S^{2n+1}/S^1$, where $S^1$ acts by
\begin{align}
(z_0,z_1,\cdots, z_n)\mapsto (e^{ip_0\theta}z_0,e^{ip_1\theta}z_1 ,\cdots, e^{ip_n\theta}z_n),
\end{align}
for $0\leq \theta <2\pi$.  This has the structure of a compact complex orbifold with the number of singular points corresponding to the number of weights greater than $1$.

Bryant proved that every weighted projective space admits a Bochner-K\"ahler metric \cite{Bryant}.   When $p_0=\cdots=p_n=1$, this metric is just the Fubini-Study metric on $\CP^n$.  Later, David and Gauduchon gave a direct construction of 
these metrics and used an argument due to Apostolov to show that this metric is the unique Bochner-K\"ahler metric on a given weighted 
projective space \cite{DavidGauduchon}.  Therefore, we refer to it as the {\em{canonical Bochner-K\"ahler metric}}.  See also \cite{Galicki-Lawson} for earlier related work. 
We do not discuss the construction of \cite{DavidGauduchon} here, because it involves sophisticated techniques in complex geometry, and the goal here is to focus on real $4$-dimensions and provide a simple construction for the canonical Bochner-K\"ahler metric on $\CP^2_{(1,1,k)}$ for any positive integer $k$.  In real $4$-dimensions, the Bochner tensor is exactly the anti-self-dual part of the Weyl tensor, so Bochner-K\"ahler metrics are the same as self-dual K\"ahler metrics.  It is interesting to remark that these metrics are in fact extremal K\"ahler, see \cite{Derdzinski}. 

Topologically $\CP^2_{(1,1,k)}=\widehat{\mathcal{O}}_{\PP^1}(-k)$.  Joyce proved that there is a quaternionic metric on $\CP^2_{(1,1,k)}$ which is conformal to the LeBrun negative mass metric on $\mathcal{O}_{\PP^1}(-k)$, see \cite{Joyce1991}.  This metric is necessarily the canonical Bochner-K\"ahler metric, however he does not find an explicit conformal factor or construction of the metric.  Here, using Theorem \ref{maintheorem}, we do just that.

\begin{theorem}
\label{wghtprojthm}
The canonical Bochner-K\"ahler metric on $\CP^2_{(1,1,k)}$ is a K\"ahler conformal compactification of the LeBrun negative mass metric on $\mathcal{O}_{\PP^1}(-k)$, and is explicitly given by
\begin{align*}
\label{BKmetric}
g_{BK(k)}=\frac{dr^2}{(r^2+k-1)(r^2-1)}+\frac{1}{r^2}\Big[\sigma_1^2+\sigma_2^2+\Big(1+\frac{k-2}{r^2}+\frac{1-k}{r^4}\Big)\sigma_3^2\Big].
\end{align*}
\end{theorem}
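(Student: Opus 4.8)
The plan is to feed the LeBrun negative mass metric directly into Theorem \ref{maintheorem} and then identify the resulting compactification. First I recall from Section \ref{negativemass} that $g_{LB(k)}$ is a ${\rm{U}}(2)$-invariant K\"ahler ALE metric on $\mathcal{O}_{\PP^1}(-k)$ arising from a potential $\phi_k(\z)$, and that LeBrun's radial coordinate is defined in \eqref{r-coordinate} by $r=\sqrt{\z\,\partial\phi_k/\partial\z}$. The key observation is that this makes the conformal factor of Theorem \ref{maintheorem} completely explicit: since $\z\,\partial\phi_k/\partial\z=r^2$, we have
\begin{align*}
u^2=\Big(\z\frac{\partial\phi_k}{\partial\z}\Big)^{-2}=r^{-4}.
\end{align*}
Thus the whole problem reduces to multiplying \eqref{negativemassmetric} by $r^{-4}$.

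Next I would carry out this multiplication and simplify. The one nontrivial algebraic step is the factorization
\begin{align*}
r^4\Big(1+\frac{k-2}{r^2}+\frac{1-k}{r^4}\Big)=(r^2-1)(r^2+k-1),
\end{align*}
after which the $dr^2$-coefficient of $r^{-4}g_{LB(k)}$ becomes $\big[(r^2-1)(r^2+k-1)\big]^{-1}$ and the sphere directions pick up the factor $r^{-2}$, yielding precisely the stated formula for $g_{BK(k)}$. By Theorem \ref{maintheorem} this metric is K\"ahler with respect to the complex structure $\mathcal{J}_{\x}$, and by the topological remark following that theorem it lives on $\CP^2_{(1,1,k)}$; in particular $g_{BK(k)}$ is, by construction, a K\"ahler conformal compactification of $g_{LB(k)}$, which is the first assertion of the theorem.

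It then remains to identify $g_{BK(k)}$ with the canonical Bochner-K\"ahler metric. Here I would use the four-dimensional dictionary recalled in Sections \ref{LeBrunmetrics} and \ref{weighted}: the LeBrun metric is K\"ahler scalar-flat, hence anti-self-dual, and since the Weyl tensor is conformally invariant while the conformal compactification reverses orientation, the compactified metric is self-dual. Being simultaneously K\"ahler and self-dual, it is Bochner-K\"ahler. Finally, the uniqueness of the Bochner-K\"ahler metric on a given weighted projective space, due to David and Gauduchon \cite{DavidGauduchon}, forces $g_{BK(k)}$ to be the canonical one; this is also consistent with Joyce's conformal description \cite{Joyce1991}. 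Since $g_{LB(k)}$ is anti-self-dual, the regularity results of \cite{TV2, CLW} guarantee that $g_{BK(k)}$ is a genuine $C^\infty$-orbifold metric, so no separate smoothness check is needed.

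The computation itself is routine, so the main conceptual point is the identification step above: one must correctly track the orientation reversal so that anti-self-duality of $g_{LB(k)}$ becomes self-duality, equivalently Bochner-flatness, of the compactification, and then invoke the David--Gauduchon uniqueness theorem to conclude that this self-dual K\"ahler metric is the \emph{canonical} Bochner-K\"ahler metric rather than merely some Bochner-K\"ahler representative.
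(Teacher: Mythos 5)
Your proposal is correct and follows essentially the same route as the paper's proof: apply Theorem \ref{maintheorem} to $g_{LB(k)}$, observe via \eqref{r-coordinate} that the conformal factor is exactly $r^{-4}$, and invoke the David--Gauduchon uniqueness of the Bochner-K\"ahler metric on $\CP^2_{(1,1,k)}$ to identify the compactification. The only difference is that you spell out the factorization $r^4\big(1+\tfrac{k-2}{r^2}+\tfrac{1-k}{r^4}\big)=(r^2-1)(r^2+k-1)$ and the orientation-reversal argument for self-duality, both of which the paper leaves implicit.
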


\begin{proof}
Recall the LeBrun negative mass metric $g_{LB(k)}$ on $\mathcal{O}_{\PP^1}(-k)$ from Section \ref{negativemass}.  This anti-self-dual K\"ahler ALE metric arises from the ${\rm{U}}(2)$-invariant K\"ahler potential $\phi_k(\z)$ on 
$\CC^2\setminus\{0\}$, and thus satisfies the conditions of Theorem \ref{maintheorem}.  Therefore
\begin{align}
 \widehat{g}_{LB(k)}=\Big(\z \frac{\partial \phi_k}{\partial \z}\Big)^{-2}g_{LB(k)}
 \end{align}
is a self-dual K\"ahler orbifold metric on $\widehat{\mathcal{O}}(-k)=\CP^2_{(1,1,k)}$.  Since the canonical Bochner-K\"ahler metric is the only such metric on $\CP^2_{(1,1,k)}$, it must be $\widehat{g}_{LB(k)}$.  
Now, recalling the coordinate change \eqref{r-coordinate} for the radial variable $r$, we see that the conformal factor $\big(\z \frac{\partial \phi_k}{\partial \z}\big)^{-2}$ is exactly $r^{-4}$ hence
\begin{align}
g_{BK(k)}=\widehat{g}_{LB(k)}=r^{-4}g_{LB(k)}.
\end{align}
\end{proof}

\begin{remark}
\label{Green's}
\em{Notice that $u^{-1}=r^2$ is the Green's function, associated to the orbifold point, for the conformal Laplacian on $(\CP^2_{(1,1,k)},g_{BK(k)})$.  In general, such a Green's function on a compact Riemannian orbifold is only guaranteed to exist if the Yamabe constant is nonnegative, and even when one is known to exist it is rare to know it explicitly.  It is useful when these functions exist because, given a compact Riemannian orbifold $(M,g)$ with a Green's function $G$ for the conformal Laplacian associated to a point $p\in M$, one can obtain a scalar-flat ALE space as the ``conformal blow-up''
\begin{align*}
(M\setminus \{p\}, G^2 g).
\end{align*}
Here a coordinate system at infinity arises from inverted normal coordinates around~$\{p\}$.
}
\end{remark}

\subsection{Extremal K\"ahler edge-cone metrics on $(\CP^2,\CP^1)$ }
\label{edgecone}

Extremal K\"ahler metrics were first introduced by Calabi in an effort to obtain a canonical metric in a given K\"ahler class \cite{Calabi1,Calabi2}.  Given a compact complex manifold with a K\"ahler metric, Calabi fixed the deRahm cohomology class of this metric and then considered the functional of the $L^2$-norm squared of the scalar curvature on the set of K\"ahler forms in this class.  Extremal K\"ahler metrics are the critical points on this functional.  Calabi showed that a K\"ahler metric is extremal K\"ahler if and only if the gradient of its scalar curvature is the real part of a holomorphic vector field, so in a way these metrics can be viewed as a generalization of constant scalar curvature K\"ahler metrics. 

Abreu constructed a $1$-parameter family of ${\rm{U}}(2)$-invariant extremal K\"ahler edge-cone metrics on $(\CP^2,\CP^1)$ having cone angles $2\pi\beta$ for all $\beta>0$, see \cite{Abreu}.  They are also self-dual.  Using the work of Derdzinski \cite{Derdzinski}, which we discuss in more detail below, Abreu then showed that, when the cone angle is restricted to $0<\beta<2$, these metrics are conformal to Einstein edge-cone metrics on $(\CP^2,\CP^1)$.  In \cite{AtiyahLeBrun}, Atiyah-LeBrun gave an independent construction of these Einstein metrics.  They  begin by using a hyperbolic analogue of the Gibbons-Hawking ansatz to construct metrics on the total space of a ${\rm{U}}(1)$-bundle over hyperbolic $3$-space minus a point.  (These metrics are conformal to the metrics $g_{LB(\beta)}$.)  Then, they used results from \cite{LeBrunNN, HitchinJDG} to solve for the explicit conformal factor that yields these Einstein edge-cone metrics.  The restriction of $0<\beta<2$ on the cone angle here is necessary to obtain a metric.

Abreu, in fact, proved much more general results, which we will not discuss here.  In this particular case however, we are actually able to explicitly construct the $1$-parameter family of ${\rm{U}}(2)$-invariant extremal K\"ahler edge-cone metrics on $(\CP^2,\CP^1)$, having cone angles $2\pi\beta$ for all $\beta>0$, in a very simple way by applying Theorem \ref{maintheorem} to the LeBrun edge-cone metrics discussed in Section \ref{negativemass}.  It is almost exactly how we obtained the canonical Bochner-K\"ahler metrics on $\CP^2_{(1,1,k)}$ above.

Recall the family of LeBrun edge-cone metrics $g_{LB(\beta)}$ on $(\mathcal{O}_{\PP^1}(-1),\CP^1)$, with cone angles $2\pi\beta$ for all $\beta>0$, from Section \ref{negativemass}.  By Theorem \ref{maintheorem}, for all $\beta>0$ we have the compactification 
\begin{align}
\widehat{g}_{LB(\beta)}=\Big(\z \frac{\partial \phi_{\beta} }{\partial \z}\Big)^{-2}g_{LB(\beta)}
\end{align}
to a self-dual K\"ahler edge-cone metric on $(\CP^2,\CP^1)$ with cone angle $2\pi\beta$.
Changing coordinates to the radial variable $r$ as in \eqref{r-coordinate}, we see that $\widehat{g}_{LB(\beta)}=r^{-4}g_{LB(\beta)}$ so on $(\CP^2,\CP^1)$ this metric is explicitly
\begin{align}
\widehat{g}_{LB(\beta)}=\frac{dr^2}{(r^2+\beta-1)(r^2-1)}+\frac{1}{r^2}\Big[\sigma_1^2+\sigma_2^2+\Big(1+\frac{\beta-2}{r^2}+\frac{1-\beta}{r^4}\Big)\sigma_3^2\Big].
\end{align}
In real four dimensions, Derdzinski showed that the gradient of the scalar curvature of self-dual K\"ahler metric is the real part of a holomorphic vector field \cite{Derdzinski}, hence the metric itself is necessarily extremal K\"ahler.  Therefore, $\widehat{g}_{LB(\beta)}$ is extremal K\"ahler, and thus we have completed the construction of the desired family ${\rm{U}}(2)$-invariant extremal K\"ahler edge-cone metrics.

It is important to distinguish that in Section \ref{weighted} we began with the nonsingular ALE metric $g_{LB(k)}$ on $\mathcal{O}_{\PP^1}(-k)$ and compactified to obtain an orbifold with isolated singularities, while here we started with the edge-cone ALE metric $g_{LB(\beta)}$ on $(\mathcal{O}_{\PP^1}(-1),\CP^1)$ and compactified to obtain a compact manifold with an edge-cone singularity.

Now, we examine the conformal relationship between the $\widehat{g}_{LB(\beta)}$ and (locally)  Einstein metrics.  The procedure discussed below was followed in \cite{Abreu} to obtain the Einstein edge-cone metrics mentioned earlier, however we expand upon this to when the metric is not globally conformally Einstein, which is when $\beta\geq 2$.  From our formulation, the reader is able to obtain a clear picture of the geometry. 
These results have been obtained previously so we only give a brief description here, see \cite{PedersenPoonKSF, HitchinJDG, CalderbankPedersen, AtiyahLeBrun} for more thorough discussions.
However, the previous results in this direction relied upon twistor theory or finding the solution to an equation involving a conformal change in Ricci curvature, while we will need only to compute the scalar curvatures of the aforementioned family of extremal K\"ahler metrics.

Derdzinski showed that a self-dual K\"ahler metric $g$ is conformal to a self-dual Hermitian Einstein metric on $M^*:=\{x\in M:R(x)\neq 0\}$, given by $\tilde{g}=R^{-2}g$, where $R$ is the scalar curvature \cite{Derdzinski}.  The conformal metric $\tilde{g}$ is no longer K\"ahler unless $R$ is constant.  
Therefore, we wish to compute the scalar curvature of the metrics $\widehat{g}_{LB(\beta)}$ on $(\CP^2,\CP^1)$, which we denote by $\widehat{R}$.  (We suppress the particular $\beta$ to which this is with respect because it will be clear from the context.)
Since $g_{LB(\beta)}$ is scalar flat, the well-known formula for the scalar curvature of a conformal metric reduces to the equation
\begin{align}
\label{conformalscalar}
\widehat{R}=-\frac{6}{u^3}\Delta_{g_{LB(\beta)}}u,
\end{align}
where $u=\big(\z \frac{\partial \phi_{\beta} }{\partial \z}\big)^{-1}=r^{-2}$, so we can compute that
\begin{align}
\widehat{R}=24\Big[(2-\beta)+\frac{2(\beta-1)}{r^2}\Big].
\end{align}
To see where this metric is conformally Einstein, we examine if and when
$\widehat{R}=0$.  Away from where the scalar curvature vanishes, these metrics will be conformally Einstein with Einstein constant $6\beta^2(2-\beta)$.
We have the following cases:
\begin{enumerate}
\item 
When $0<\beta<2$, the scalar curvature $\widehat{R}$ is everywhere positive.
Therefore, the metric $\widehat{R}^{-2}\widehat{g}_{LB(\beta)}$
is a self-dual Einstein edge-cone metric, with positive Einstein constant, on $(\CP^2,\CP^1)$ with cone angle $2\pi\beta$.  It is easy to check that, after scaling, this construction gives the Einstein metrics found independently by Atiyah-Lebrun and Abreu discussed earlier.  Only for this range of cone angles does the scalar curvature not vanish somewhere, hence they are the only globally conformally Einstein metrics.

\item 
When $\beta=2$, the scalar curvature 
$\widehat{R}$ vanishes at the point of compactification, so the metric is conformally Einstein by a factor of $\widehat{R}^{-2}=r^4/48^2$, with vanishing Einstein constant, away from this point.  Notice that the conformal factor is a scalar multiple of the inverse of the K\"ahler conformal compactification factor, so the conformal metric will be a scaled version of the original ALE metric.  This is the Eguchi-Hanson metric on
$\mathcal{O}_{\PP^1}(-1)$, which is the double cover branched over the $\CP^1$ at the origin of the usual space \cite{EguchiHanson}.

\item 
When $\beta>2$, the scalar curvature $\widehat{R}$ vanishes along the hypersurface defined by $r^2=2(\beta-1)/(\beta-2)$.  The compliment is composed of two pieces on each of which the metric is conformal to a self-dual asymptotically hyperbolic Einstein (AHE) metric by a factor of $\widehat{R}^{-2}$.  The piece containing the singular set $\CP^1$ is diffeomorphic to $\mathcal{O}_{\PP^1}(-1)$, and on this we obtain a family of self-dual AHE edge-cone metrics.  Observe that, when $\beta=k$ is a positive integer, the corresponding AHE edge-cone metric has a quotient as in \eqref{Znaction} to a nonsingular self-dual (locally) AHE metric on $\mathcal{O}_{\PP^1}(-k)$ with boundary a lens space.  These are the well-known Pedersen-LeBrun metrics, see \cite{HitchinJDG,CalderbankSinger}.  
The piece containing the point of compactification is diffeomorphic to the $4$-ball, and on this we obtain a family of smooth self-dual AHE metrics.  After changing variables and rescaling, we see that these are in fact the well-known Pedersen metrics \cite{Pedersenmetrics}.
\end{enumerate}

\bibliography{conformal_factor_references}

\def\cprime{$'$}
\providecommand{\bysame}{\leavevmode\hbox to3em{\hrulefill}\thinspace}
\providecommand{\MR}{\relax\ifhmode\unskip\space\fi MR }
% \MRhref is called by the amsart/book/proc definition of \MR.
\providecommand{\MRhref}[2]{%
  \href{http://www.ams.org/mathscinet-getitem?mr=#1}{#2}
}
\providecommand{\href}[2]{#2}
\begin{thebibliography}{CLW08}

\bibitem[Abr01]{Abreu}
Miguel Abreu, \emph{K\"ahler metrics on toric orbifolds}, J. Differential Geom.
  \textbf{58} (2001), no.~1, 151--187.

\bibitem[ACG13]{Apostolov-Calderbank-Gauduchon}
Vestislav Apostolov, David~M.J. Calderbank, and Paul Gauduchon, \emph{Ambitoric
  geometry {I}: {E}instein metrics and extremal ambik\"ahler structures}, to
  appear in J. Reine Angew. Math., arXiv.org:1302.6975, 2013.

\bibitem[AL13]{AtiyahLeBrun}
Michael Atiyah and Claude Lebrun, \emph{Curvature, cones and characteristic
  numbers}, Math. Proc. Cambridge Philos. Soc. \textbf{155} (2013), no.~1,
  13--37.

\bibitem[AV12]{AV12}
Antonio~G. Ache and Jeff~A. Viaclovsky, \emph{Obstruction-flat asymptotically
  locally {E}uclidean metrics}, Geom. Funct. Anal. \textbf{22} (2012), no.~4,
  832--877.

\bibitem[Bry01]{Bryant}
Robert~L. Bryant, \emph{Bochner-{K}\"ahler metrics}, J. Amer. Math. Soc.
  \textbf{14} (2001), no.~3, 623--715 (electronic).

\bibitem[Bur86]{Burns}
Daniel Burns, \emph{Twistors and harmonic maps}, Talk in Charlotte, N.C.,
  October 1986.

\bibitem[Cal82]{Calabi1}
Eugenio Calabi, \emph{Extremal {K}\"ahler metrics}, Seminar on {D}ifferential
  {G}eometry, Ann. of Math. Stud., vol. 102, Princeton Univ. Press, Princeton,
  N.J., 1982, pp.~259--290.

\bibitem[Cal85]{Calabi2}
\bysame, \emph{Extremal {K}\"ahler metrics. {II}}, Differential geometry and
  complex analysis, Springer, Berlin, 1985, pp.~95--114.

\bibitem[Che09]{SSChen}
Szu-yu~Sophie Chen, \emph{Optimal curvature decays on asymptotically locally
  euclidean manifolds}, arXiv.org:0911.5538, 2009.

\bibitem[CLW08]{CLW}
Xiuxiong Chen, Claude Lebrun, and Brian Weber, \emph{On conformally {K}\"ahler,
  {E}instein manifolds}, J. Amer. Math. Soc. \textbf{21} (2008), no.~4,
  1137--1168.

\bibitem[CP00]{CalderbankPedersen}
D.~M.~J. Calderbank and H.~Pedersen, \emph{Selfdual spaces with complex
  structures, {E}instein-{W}eyl geometry and geodesics}, Ann. Inst. Fourier
  (Grenoble) \textbf{50} (2000), no.~3, 921--963.

\bibitem[CS04]{CalderbankSinger}
David M.~J. Calderbank and Michael~A. Singer, \emph{Einstein metrics and
  complex singularities}, Invent. Math. \textbf{156} (2004), no.~2, 405--443.

\bibitem[Der83]{Derdzinski}
Andrzej Derdzi{\'n}ski, \emph{Self-dual {K}\"ahler manifolds and {E}instein
  manifolds of dimension four}, Compositio Math. \textbf{49} (1983), no.~3,
  405--433.

\bibitem[DG06]{DavidGauduchon}
Liana David and Paul Gauduchon, \emph{The {B}ochner-flat geometry of weighted
  projective spaces}, Perspectives in {R}iemannian geometry, CRM Proc. Lecture
  Notes, vol.~40, Amer. Math. Soc., Providence, RI, 2006, pp.~109--156.

\bibitem[EH79]{EguchiHanson}
Tohru Eguchi and Andrew~J. Hanson, \emph{Self-dual solutions to {E}uclidean
  gravity}, Ann. Physics \textbf{120} (1979), no.~1, 82--106.

\bibitem[GL88]{Galicki-Lawson}
K.~Galicki and H.~B. Lawson, Jr., \emph{Quaternionic reduction and quaternionic
  orbifolds}, Math. Ann. \textbf{282} (1988), no.~1, 1--21.

\bibitem[Hit95]{HitchinJDG}
N.~J. Hitchin, \emph{Twistor spaces, {E}instein metrics and isomonodromic
  deformations}, J. Differential Geom. \textbf{42} (1995), no.~1, 30--112.

\bibitem[HP78]{HawkingPope}
S.~W. Hawking and C.~N. Pope, \emph{Symmetry breaking by instantons in
  supergravity}, Nuclear Phys. B \textbf{146} (1978), no.~2, 381--392.

\bibitem[JMR11]{JMR}
Thalia~D. Jeffres, Rafe Mazzeo, and Yanir~A. Rubinstein,
  \emph{K\"ahler-einstein metrics with edge singularities}, to appear in Ann.
  of Math., arXiv:1105.5216, 2011.

\bibitem[Joy91]{Joyce1991}
Dominic Joyce, \emph{The hypercomplex quotient and the quaternionic quotient},
  Math. Ann. \textbf{290} (1991), no.~2, 323--340.

\bibitem[LeB88]{LeBrunnegative}
Claude LeBrun, \emph{Counter-examples to the generalized positive action
  conjecture}, Comm. Math. Phys. \textbf{118} (1988), no.~4, 591--596.

\bibitem[LeB91]{LeBrunJDG}
\bysame, \emph{Explicit self-dual metrics on {${\bf {C}}{\rm {P}}\sb
  2\#\cdots\#{\bf {C}}{\rm {P}}\sb 2$}}, J. Differential Geom. \textbf{34}
  (1991), no.~1, 223--253.

\bibitem[LNN97]{LeBrunNN}
Claude LeBrun, Shin Nayatani, and Takashi Nitta, \emph{Self-dual manifolds with
  positive {R}icci curvature}, Math. Z. \textbf{224} (1997), no.~1, 49--63.

\bibitem[Ped86]{Pedersenmetrics}
H.~Pedersen, \emph{Einstein metrics, spinning top motions and monopoles}, Math.
  Ann. \textbf{274} (1986), no.~1, 35--59. \MR{834105 (87i:53070)}

\bibitem[PP90]{PedersenPoonKSF}
Henrik Pedersen and Yat~Sun Poon, \emph{K\"ahler surfaces with zero scalar
  curvature}, Classical Quantum Gravity \textbf{7} (1990), no.~10, 1707--1719.

\bibitem[Sim91]{Simanca}
Santiago~R. Simanca, \emph{K\"ahler metrics of constant scalar curvature on
  bundles over {${\bf C}{\rm P}_{n-1}$}}, Math. Ann. \textbf{291} (1991),
  no.~2, 239--246.

\bibitem[Str10]{Streets}
Jeffrey Streets, \emph{Asymptotic curvature decay and removal of singularities
  of {B}ach-flat metrics}, Trans. Amer. Math. Soc. \textbf{362} (2010), no.~3,
  1301--1324.

\bibitem[TV05]{TV2}
Gang Tian and Jeff Viaclovsky, \emph{Moduli spaces of critical {R}iemannian
  metrics in dimension four}, Adv. Math. \textbf{196} (2005), no.~2, 346--372.

\bibitem[Via10]{ViaclovskyFourier}
Jeff Viaclovsky, \emph{Monopole metrics and the orbifold {Y}amabe problem},
  Annales de L'Institut Fourier \textbf{60} (2010), no.~7, 2503--2543.

\end{thebibliography}

\end{document}